\theoremstyle{plain}
\newtheorem{theorem}{Theorem}[section]
\newtheorem{remark}[theorem]{Remark}
\numberwithin{equation}{section}
\newcommand{\tr}{\mathop{\mathrm{tr}}}
\newcommand{\diag}{\mathop{\mathrm{diag}}}
\newcommand{\rank}{\mathop{\mathrm{rank}}}
\begin{document}
\title[On Wishart distributions and their infinite divisibility]{On the parameter domain of Wishart distributions and their infinite divisibility}
\author{Eberhard Mayerhofer}
\address{Vienna Institute of Finance,
University of Vienna and Vienna University of Economics and Business, Heiligenst\"adterstrasse 46-48,
1190 Vienna, Austria}
\email{eberhard.mayerhofer@vif.ac.at}
\begin{abstract}
A complete characterization of Wishart distributions on the cones of
positive semi-definite matrices is provided in terms of a
description of their maximal parameter domain. This result is new in
that also degenerate scale parameters are included. For such cases,
the standard constraints on the range of the shape parameter may be
relaxed. Furthermore, the infinitely divisible Wishart distributions
are revealed as suitable transformations and embeddings of one
dimensional gamma distributions. This note completes the findings of
L\'evy (1937) concerning infinite divisibility and Gindikin (1975)
regarding the existence issue.
\end{abstract}
\keywords{Wishart distribution, Gindikin ensemble, matrix variate
distributions, parameter domain}
\maketitle
\section{Introduction}
The Wishart distribution $\Gamma(p;\sigma)$ on the cone $S_d^+$ of
symmetric positive semi-definite $d\times d$ matrices is
 defined (whenever it exists) by its Laplace transform
\begin{equation}\label{FLT Mayerhofer Wishart}
\mathcal L (\gamma(p;\sigma))(u)= \left(\det(I+\sigma
u)\right)^{-p},\quad u\in - S_d^+,
\end{equation}
were $p>0$ denotes its shape parameter and $\sigma\in S_d^{+}$ is
the scale parameter. In the non-degenerate case where $\sigma$ is
invertible, and for a discrete set of shape parameters, these
distributions have been introduced in 1928 by Wishart \cite{Wishart}
as sums of "squares'' of centered multivariate normal distributions
(quite similarly to the the chi-squared distributions in dimension
one).

In 1937, L\'evy \cite{Levy} showed that $\gamma(p;\sigma)$ on
$S_2^+$ is not infinitely divisible for invertible $\sigma$, which
means that for some sequence of shape parameters $p_k\downarrow 0$,
$\gamma(p_k;\sigma)$ cannot exist. Gindikin \cite{Gindikin},
Shanbhag \cite{Shanbhag} and Peddada \& Richards
\cite{PeddadaRichards91}\footnote{Contrary to
\cite{PeddadaRichards91} we exclude the point mass at zero, i.e. the
Gindikin ensemble does not contain $0$. That's why we have chosen
$p>0$.} subsequently showed: \\\\{\bf Theorem G.  }{\it For
non-degenerate $\sigma$ the following are equivalent:
\begin{enumerate} \item The right side of \eqref{FLT Mayerhofer
Wishart} is the Laplace transform of a probability measure. \item
$p$ belongs to the Gindikin ensemble
\[
\Lambda_d=\left\{\frac{j}{2},\quad j=1,2,\dots,
d-2\right\}\cup\left[\frac{d-1}{2},\infty\right).
\]
\end{enumerate}
}
Aim of the present note is to extend this characterization by also
allowing for degenerate $\sigma\in S_d^+$, and, to determine all
Wishart distributions which are infinitely divisible.

It is worth noting that -- in view of the right side of \eqref{FLT
Mayerhofer Wishart} and L\'evy's continuity theorem -- the maximal
parameter domain
\[
\Theta_d:=\{(p,\sigma)\in \mathbb R_+\times S_d^+\mid
\Gamma(p;\sigma)  \textrm{ is a probability measure}\}
\]
is closed, and that only for invertible scale parameters the issues
of existence and infinite divisibility have been settled in the
literature (where one easily follows  from the other). Hence the
following two statements answer very natural question on the Wishart
family.
\begin{theorem}\label{th: maintheorem}
Let $d\in\mathbb N$, $p\geq 0$ and $\sigma \in S_d^{+}$ with
$\rank(\sigma)=r$. The following are equivalent:
\begin{enumerate}
\item \label{char 1} The right side of \eqref{FLT Mayerhofer Wishart} is the Laplace transform of a non-trivial probability
measure $\Gamma(p;\sigma)$ on $S_d^+$.
\item \label{char 2} $p\in \Lambda_r$.
\end{enumerate}
\end{theorem}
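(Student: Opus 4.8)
The plan is to reduce the degenerate case entirely to the non-degenerate statement of Theorem~G, but applied in dimension $r$ instead of $d$. \emph{Step 1 (congruence reduction):} I would first record that both existence and non-triviality of $\Gamma(p;\sigma)$ depend on $\sigma$ only through its congruence class. If $\mu=\Gamma(p;\sigma)$ is a probability measure on $S_d^+$ and $B\in GL_d(\mathbb R)$, then the pushforward of $\mu$ under the bijection $X\mapsto BXB^\top$ of $S_d^+$ has Laplace transform $u\mapsto\mathcal L(\mu)(B^\top uB)$, and the identity $\det(I+MN)=\det(I+NM)$ gives $\det(I+\sigma B^\top uB)=\det(I+(B\sigma B^\top)u)$; hence this pushforward equals $\Gamma(p;B\sigma B^\top)$, and it is non-trivial iff $\mu$ is. Since every $\sigma\in S_d^+$ of rank $r$ is congruent to $P_r:=\diag(I_r,0)$, it suffices to treat $\sigma=P_r$.

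\emph{Step 2 (dimensional descent):} Writing $u$ in $r+(d-r)$ block form with top-left block $u_{11}$, one has $\det(I+P_ru)=\det(I_r+u_{11})$, so the right-hand side of \eqref{FLT Mayerhofer Wishart} depends on $u$ only through $u_{11}$. Let $\iota\colon S_r^+\hookrightarrow S_d^+$, $Y\mapsto\diag(Y,0)$. In one direction, if $\Gamma_r(p;I_r)$ exists on $S_r^+$ then $\iota_*\Gamma_r(p;I_r)$ is a probability measure on $S_d^+$ with Laplace transform $(\det(I_r+u_{11}))^{-p}$, i.e.\ it equals $\Gamma_d(p;P_r)$, and it is non-trivial iff $\Gamma_r(p;I_r)$ is. For the converse I would take $\mu=\Gamma_d(p;P_r)$ and evaluate its Laplace transform at $u=\diag(0,-tI_{d-r})$, $t>0$, obtaining $\int_{S_d^+} e^{-t\,\tr(X_{22})}\,\mu(dX)=1$; since $X_{22}\succeq 0$ the integrand is $\le 1$, so $\tr(X_{22})=0$ and hence $X_{22}=0$ $\mu$-a.s., and positive semidefiniteness of $X$ then forces the off-diagonal block to vanish as well. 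Thus $\mu=\iota_*\nu$ for a probability measure $\nu$ on $S_r^+$, and comparing Laplace transforms identifies $\nu=\Gamma_r(p;I_r)$.

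\emph{Step 3 (conclusion):} Chaining the two steps, the right-hand side of \eqref{FLT Mayerhofer Wishart} is the Laplace transform of a non-trivial probability measure on $S_d^+$ precisely when $\Gamma_r(p;I_r)$ is a non-trivial probability measure on $S_r^+$, and Theorem~G in dimension $r$ (for the invertible scale $I_r$) says this holds iff $p\in\Lambda_r$. I expect the main obstacle to be Step~2: making the support argument airtight, namely that the Laplace transform seeing only the $u_{11}$-block genuinely pins the whole mass onto $\iota(S_r^+)$, and then carefully disposing of the low-rank boundary cases (notably $r=0$, where $\sigma=0$ and only the trivial $\delta_0$ occurs, and the value $p=0$), where the word ``non-trivial'' is what does the separating.
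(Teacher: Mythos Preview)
Your proposal is correct and follows the same strategy as the paper: reduce $\sigma$ by a linear change of coordinates to block-diagonal form (you use a general $B\in GL_d$ to reach $P_r=\diag(I_r,0)$, the paper uses an orthogonal $U$ to reach $\diag(D,0)$), then descend to dimension $r$ and apply Theorem~G there. The only minor difference is in the descent for \ref{char 1}$\Rightarrow$\ref{char 2}: you establish a support statement (that $\Gamma_d(p;P_r)$ is concentrated on $\iota(S_r^+)$) before identifying the $r$-dimensional measure, whereas the paper---and a shorter variant of your own argument---simply pushes forward along the projection $\pi_{d\to r}$ to $S_r^+$, which automatically yields a probability measure with Laplace transform $\det(I_r+Dv)^{-p}$ and hence forces $p\in\Lambda_r$; so the support argument you flagged as the ``main obstacle'' is correct but not actually needed.
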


\begin{theorem}\label{th: side result}
Let $d\in\mathbb N$, $p\geq 0$ and $\sigma \in S_d^{+}$. The
following are equivalent:
\begin{enumerate}
\item \label{char 3} $\Gamma(p;\sigma)$ is infinitely divisible.
\item \label{char 4} $\rank(\sigma)=1$.
\end{enumerate}
\end{theorem}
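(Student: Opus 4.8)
\noindent\emph{Proposed proof.} The plan is to derive Theorem~\ref{th: side result} from Theorem~\ref{th: maintheorem} together with the classical infinite divisibility of the one-dimensional gamma law. Since the point mass $\delta_{0}$, which arises when $p=0$ or $\sigma=0$, is trivially infinitely divisible irrespective of $\rank(\sigma)$, I read the statement for non-trivial $\Gamma(p;\sigma)$ and accordingly assume $p>0$ and $r:=\rank(\sigma)\geq 1$.

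For the implication \ref{char 4} $\Rightarrow$ \ref{char 3}, suppose $r=1$ and write $\sigma=vv^{\top}$ with $v\in\mathbb R^{d}$. The determinant identity $\det(I+ab^{\top})=1+b^{\top}a$ gives $\det(I+\sigma u)=1+\tr(\sigma u)$, so by \eqref{FLT Mayerhofer Wishart} the Laplace transform of $\Gamma(p;\sigma)$ equals $(1+\tr(\sigma u))^{-p}$, which is exactly the Laplace transform of the image of the scalar gamma law $\Gamma(p;1)$ (the classical gamma distribution with shape $p$) under the additive map $t\mapsto t\sigma$ of $\mathbb R_{+}$ into $S_{d}^{+}$. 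As $\Gamma(p;1)=\Gamma(p/n;1)^{*n}$ for every $n$ and pushing a measure forward along an additive map commutes with convolution, this yields $\Gamma(p;\sigma)=\Gamma(p/n;\sigma)^{*n}$ for all $n$, so $\Gamma(p;\sigma)$ is infinitely divisible; this is the ``embedding of one-dimensional gamma distributions'' announced in the abstract.

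For \ref{char 3} $\Rightarrow$ \ref{char 4}, I would argue by contraposition and show that $r\geq 2$ precludes infinite divisibility. If $\Gamma(p;\sigma)$ were infinitely divisible, then for each $n$ there would be a probability measure $\mu_{n}$ with $\mu_{n}^{*n}=\Gamma(p;\sigma)$; applying $\mathcal L$ gives $\mathcal L(\mu_{n})(u)^{n}=(\det(I+\sigma u))^{-p}$, and since the Laplace transform of a probability measure is real and strictly positive wherever it is finite, taking the unique positive $n$-th root forces $\mathcal L(\mu_{n})(u)=(\det(I+\sigma u))^{-p/n}$. Hence $\Gamma(p/n;\sigma)$ exists as a probability measure for every $n$ (it equals $\mu_{n}$, which is non-trivial because its $n$-fold convolution $\Gamma(p;\sigma)$ is), and Theorem~\ref{th: maintheorem} forces $p/n\in\Lambda_{r}$ for all $n$. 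But for $r\geq 2$ the least element of $\Lambda_{r}$ is $\tfrac12$, so $\Lambda_{r}\subseteq[\tfrac12,\infty)$, whereas $p/n\to 0$; choosing $n$ with $0<p/n<\tfrac12$ is the desired contradiction, so $r=1$.

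I do not expect any serious obstacle once Theorem~\ref{th: maintheorem} is in hand: having pinned the existence of $\Gamma(p;\sigma)$ to the condition $p\in\Lambda_{\rank(\sigma)}$, infinite divisibility reduces---through the standard ``every convolution root must exist'' argument---to the question whether $\Lambda_{\rank(\sigma)}$ contains arbitrarily small positive reals, which happens precisely for $\rank(\sigma)=1$. The remaining ingredients (the determinant identity, the infinite divisibility of the scalar gamma, the compatibility of pushforward with convolution, and the identification of $\min\Lambda_{r}$) are routine.
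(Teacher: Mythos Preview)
Your proof is correct and matches the paper's argument in essence: both directions hinge on Theorem~\ref{th: maintheorem} together with the observation that $\Lambda_r$ contains arbitrarily small positive numbers if and only if $r=1$. The only cosmetic difference is in \ref{char 4}$\Rightarrow$\ref{char 3}, where the paper simply cites Theorem~\ref{th: maintheorem} to obtain the existence of $\Gamma(p/n;\sigma)$ for all $n$ (since $\Lambda_1=(0,\infty)$) and reads off infinite divisibility from \eqref{FLT Mayerhofer Wishart}, whereas you make the embedding of the scalar gamma explicit via $t\mapsto t\sigma$; these are two phrasings of the same construction.
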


\begin{remark}\rm
\begin{itemize}
\item It should be pointed out that Theorem \ref{th: maintheorem} does not
follow from the classification of positive Riesz distributions, when
$\rank(\sigma)<d$, see, e.g., \cite[Theorem VII.3.2]{FK04}. In fact
let us consider the special case, where
$\sigma=\diag(1,0,\dots,0)\in S_d^+$, and denote by $\pi_{d\to r}$
the projection onto the $r$--th subminors of $S_d^+$, that is
\begin{equation}\label{proj p d to r}
\pi_{d\to r}:\, S_d^+\rightarrow S_r^+,\quad \pi_{d\to
r}(a)=(a_{ij})_{1\leq i,j\leq r}.
\end{equation}
Then the right side of \eqref{FLT Mayerhofer Wishart} takes the form
\begin{equation}\label{flt counter}
(\det (\pi_{d\to r} (1+u)))^{-p}=(1+u_{11})^{-p},
\end{equation}
which the Laplace transform of an infinitely divisible probability
measure on $S_d^+$ due to the theorems \ref{th:
maintheorem}--\ref{th: side result}. However, the "corresponding''
Riesz distribution \footnote{This is a member of the natural
exponential family generated by the standard Riesz distribution with
Laplace transform $(\det (\pi_{d\to r} (u^{-1})))^{p}$} (which is
also infinitely divisible) has Laplace transform
\[
(\det(\pi_{d\to r} ((I+u)^{-1})))^{p}=\left(\frac{1+
u_{22}}{\det(I+u)}\right)^p,
\]
which obviously differs from eq.~\eqref{flt counter}. Nevertheless,
both functions are characteristic functions of probability measures
on the sub-cones of  positive matrices of rank $1$.
\item It is possible to generalize the Theorems of
this paper to symmetric cones. In that more general setting the set
$\Lambda$ must be replaced by the so-called Wallach set, see, e.g.,
\cite[Theorem VII.3.1]{FK04}. We avoid this setting to make the
presentation short and accessible to a larger group of readers.
\end{itemize}

\end{remark}

Notation: $I_d$ is the $d\times d$ unit matrix, and if there arises
no confusion, we simply write $I$. $\det$ and $\tr$ are determinant
and trace operator, and $\rank$ denotes the matrix rank. For
matrices $A,B$ of $r\times r$ and $t\times t$ dimension,
$\diag(A,B)$ denotes the corresponding block-diagonal $(r+t)\times
(r+t)$ matrix.

\begin{proof}[Proof of Theorem \ref{th: maintheorem}]
Let $U$ be an orthogonal matrix such that $U\sigma
U^\top=\diag(D,0,\dots,0)$ where $D=\diag(\sigma_1,\dots,\sigma_r)$.
We introduce the linear automorphism $g_U$ as the map
\[
g_U:\,S_d^+\rightarrow S_d^+,\quad g_U(\xi):=U\xi U^\top.
\]

Proof of \ref{char 1}$\Rightarrow$ \ref{char 2}: We denote by
$\Gamma_*$ the push-forward measure of $\Gamma(p;\sigma)$ under
$g_U$, which means that for Borel sets $A\in S_d^+$ we have
$\Gamma_*(A)=\Gamma(p;\sigma)(g_U^{-1}(A))=\Gamma(p;\sigma)(U^\top A
U)$. By \cite[eq.~(1.3.4)]{GuptaNagar01} the functional determinant
of $g_U$ equals $1$, hence for all $s\in -S_d^+$ we have
\begin{align}\nonumber
\mathcal
L(\Gamma_*)(s)&:=\int_{S_d^+}e^{\tr(s\xi)}\Gamma^*(d\xi)=\int_{S_d^+}e^{\tr(s\xi)}\Gamma(p;\sigma)(d(U^\top\xi
U))\\\nonumber &=\int_{S_d^+}e^{\tr(s U\eta
U^\top)}\Gamma(p;\sigma)(d\eta)=\int_{S_d^+}e^{\tr(U^\top s U \eta
)}\Gamma(p;\sigma)(d\eta)\\\nonumber &=\det(I+ \sigma U^\top s
U)^{-p}=\det(I+ U\sigma U^\top s)^{-p}\\\label{eq: pullover}
&=\det(I_d+ \diag(D,0) s)^{-p}=\det(I_r+ D s)^{-p}.
\end{align}
By Theorem G, we have that there exists a probability measure
$\Gamma(p;D)$ on $S_d^+$ with Laplace transform
\[
\det(I_r+ Du )^{-p},\quad u\in -S_r^+.
\]
We may therefore conclude that $\Gamma_*$ equals
$\Gamma(p;D)^{*,\pi_{d\to r}}$, the pullback of $\Gamma(p;D)$ under
$\pi_{d\to r}$ (see the defining equation \eqref{proj p d to r}). In
fact, for all $s\in -S_d^+$ we obtain
\[
\mathcal L(\Gamma(p;D)^{*,\pi_{d\to r}})(s)=
\int_{S_d^+}e^{\tr(\pi_{d\to r}(s)\xi)}\Gamma(p;D)(d\xi),
\]
which equals eq.~\eqref{eq: pullover}. But since
$\diag(\sigma_1,\dots,\sigma_r)$ is of full rank $r$, we have by
Theorem G that $p\in \Lambda_r$, and we have shown that \ref{char 2}
holds.

Proof of \ref{char 2}$\Rightarrow$ \ref{char 1}: We reverse the
preceding arguments: By Theorem G, there exists a probability
measure $\Gamma(p,D)$ on $S_r^+$  for $p\in\Lambda_r$. Let
$\pi^{r\to d}$ be the embedding
\[
\pi^{r\to d}: S_r^+\rightarrow S_d^+,\quad \pi^{r\to d}(A):=\diag
(A,0).
\]
Then the Laplace transform of the push-forward
$\Gamma(p,D)^{\pi^{r\to d}}_*$ of $\Gamma(p,D)$ under $\pi^{r\to d}$
equals
\[
\mathcal L(\Gamma(p,D)^{\pi^{r\to d}}_*)=\det(I_d+
\diag(D,0,\dots,0)\pi_{d\to r}(s))^{-p},
\]
Hence by eq.~\eqref{eq: pullover} the push-forward of
$\Gamma(p,D)^{\pi^{r\to d}}_*$ under
$g_{U^{-1}}=g_U^{-1}=g_{U^\top}$ (hence given by
$g_U^{-1}(\xi)=U^\top\xi U$) equals $\Gamma(p,\sigma)$, which proves
\ref{char 1}.
\end{proof}

\begin{proof}[Proof of Theorem \ref{th: side result}]
\ref{char 3}$\Rightarrow$ \ref{char 4}: By infinite divisibility and
the right side of eq.~\eqref{FLT Mayerhofer Wishart} we see that
$\Gamma(p/n;\sigma)$ is a probability measure for each $n\geq 1$.
But by definition, $\Gamma_d\subseteq [1/2,\infty)$ if and only if
$d>1$. Hence by Theorem \ref{th: maintheorem} we have that
$\rank(\sigma)=1$.

The converse implication \ref{char 4}$\Rightarrow $\ref{char 3} may
be proved in a similar way: By Theorem \ref{th: maintheorem} and in
view of the fact that $\Gamma_1=(0,\infty)$, we have that
$\Gamma(p/n;\sigma)$ is a probability measure for all $n\geq 1$.
Therefore by the right side of eq.~\eqref{FLT Mayerhofer Wishart} we
see that $\Gamma(p;\sigma)$ is infinitely divisible.
\end{proof}


\end{document}